\def\utr{\, \underline{\triangleright}\, }
\def\otr{\, \overline{\triangleright}\, }
\def\ud{\, \underline{\bullet}\, }
\def\od{\, \overline{\bullet}\, }
\newtheorem{theorem}{Theorem}
\newtheorem{corollary}[theorem]{Corollary}
\theoremstyle{definition}
\newtheorem{example}{Example}
\newtheorem{definition}{Definition}
\date{}
\title{\Large \textbf{Psyquandle Coloring Quivers}}
\author{Jose Ceniceros\footnote{Email: jcenicer@hamilton.edu}\and
Anthony Christiana \footnote{Email: achristi@hamilton.edu}\and
Sam Nelson\footnote{Email: Sam.Nelson@cmc.edu. 
Partially supported by Simons Foundation collaboration grant 702597.}}
\begin{document}
\maketitle

\begin{abstract}
We enhance the psyquandle counting invariant for singular knots and pseudoknots
using quivers analogously to quandle coloring quivers. This enables us to extend
the in-degree polynomial invariants from quandle coloring quiver theory to the 
case of singular knots and pseudoknots. As a side effect we obtain biquandle 
coloring quivers and in-degree polynomial invariants for classical and virtual
knots and links.
\end{abstract}

\parbox{5.5in} {\textsc{Keywords:} Psyquandles, biquandles, quivers, 
pseudoknots, singular knots, virtual knots

                \smallskip
                
                \textsc{2020 MSC:} 57K12}

\section{Introduction}

\textit{Singular knots and links} are four-valent spatial graphs in which the vertices (known as \textit{singular crossings}) have a fixed cyclic ordering. \textit{Pseudoknots} are classical knots in which crossing information for certain crossings, known as \textit{precrossings}, is unknown. The Reidemeister moves for singular knots and pseudoknots are nearly the same -- more precisely, replacing the singular crossings with precrossings transforms all of the singular Reidemeister moves into pseudoknot Reidemeister moves, with only one move missing, a Reidemeister I-style move with a precrossing.

In \cite{NOS}, an algebraic structure known as \textit{psyquandle} was introduced and applied to define invariants of singular knots and pseudoknots. A psyquandle is a set with four binary operations satisfying algebraic axioms coming from the Reidemeister moves for pseudoknots and singular knots. Psyquandles can be regarded as classical biquandles enhanced with two additional binary operations whose interactions with each other and the classical biquandle operations encode the pseudoknot and singular Reidemeister moves.

In \cite{CN}, the quandle counting invariant for classical knots and links was enhanced with a choice of subset of the endomorphism ring of the coloring quandle to define a quiver-valued invariant of knots and links known as the \textit{quandle coloring quiver}. Since comparing isomorphism classes of quivers directly can be inconvenient, a polynomial invariant called the \textit{in-degree polynomial} was defined from the quandle coloring quiver. 

In this paper we apply the idea of coloring quivers to the cases of biquandles and psyquandles, defining new directed-graph valued invariants of classical knots as well as pseudoknots and singular knots. The in-degree polynomials are also defined for these structures as an application. The paper is organized as follows. In Section \ref{K} we provide a brief review of the basics of virtual knots, singular knots and pseudoknots. In Section \ref{PBR} we review the basics of psyquandles and biquandles. In Section \ref{PCQ} we define psyquandle coloring quivers. In Section \ref{BCQ} we define a special case of psyquandle coloring quivers, biquandle coloring quivers, for classical and virtual knots and links. In Section \ref{E} we provide examples of the new invariants, in particular establishing that the quivers and their associated in-degree polynomials are not determined by the counting invariant in general. We conclude in Section \ref{Q} with some questions for future research.

\section{Virtual Knots, Singular Knots and Pseudoknots}\label{K}

In this section we briefly review the basics of virtual knots, singular knots 
and pseudoknots. For more details see \cite{GPV,KK,K}.

\textit{Virtual knots} can be understood as equivalence classes of 
\textit{signed Gauss codes} under the signed Gauss code versions of 
Reidemeister 
moves. Signed Gauss codes are text-based encodings of oriented knot diagrams 
formed by assigning each crossing a number and crossing sign, then
recording the sequence of over- and under-crossing labels with signs as one
travels around the knot. Unlike knot diagrams, Gauss codes do not include
easily accessible information about which strands of the knot are adjacent
in the plane, and hence the Gauss code version of the Reidemeister II move
allows for the move to be performed on non-adjacent strands, resulting
in knot diagrams which cannot be drawn on the plane but require a supporting
surface of nonzero genus. This is equivalent to an earlier idea known as 
\textit{abstract knot theory} in which abstract knots are Reidemeister
equivalence classes of collections of crossings with labels indicating how the
strands are connected. Abstract knots can be embedded
in orientable supporting surfaces of various genus; such supporting surfaces 
are defined only up to \textit{stabilization moves} which add or remove 
handles away from the knot.

For the purpose of drawing virtual knots on flat paper, we allow 
\textit{virtual crossings} which show where handles are flattened into the
paper. 
\[\includegraphics{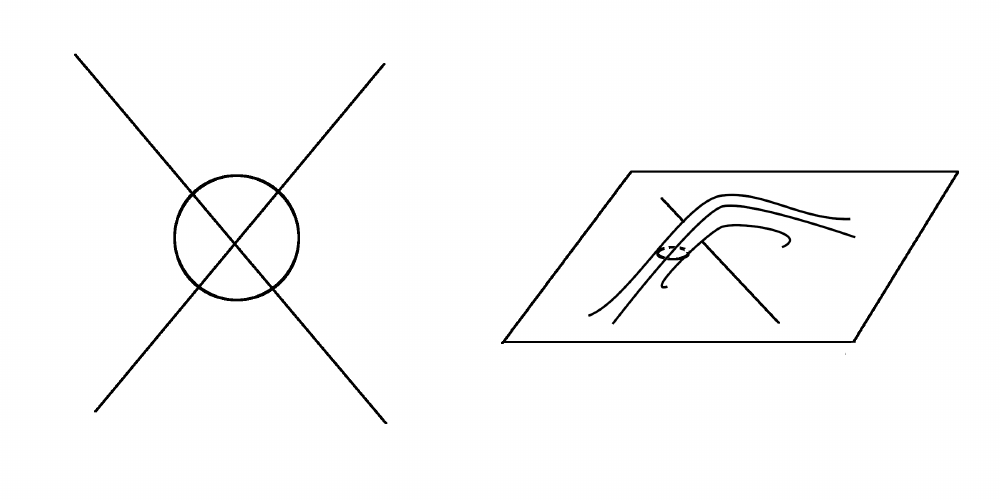}\]
These crossings interact with classical crossings via the \textit{detour move}
\[\includegraphics{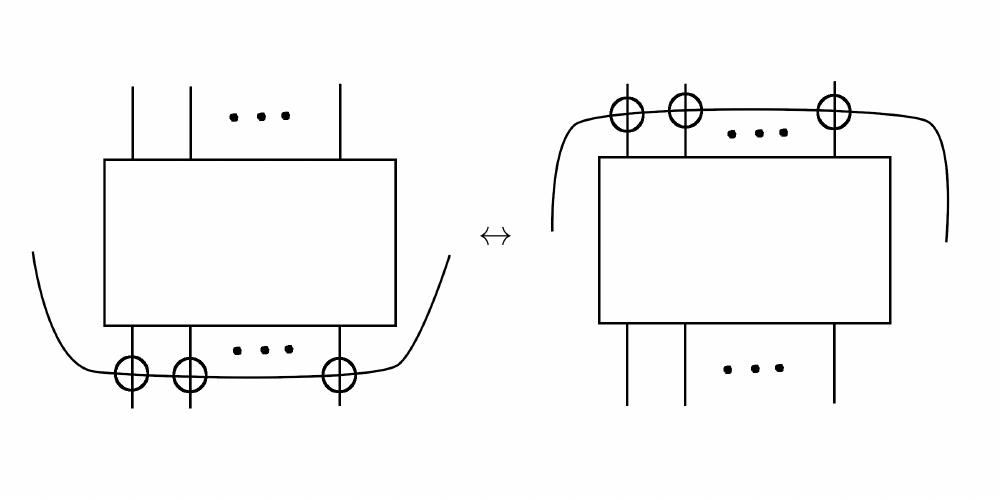}\]
motivated by the observation that repositioning of a handle before 
flattening into the plane does not change the virtual knot type. The detour 
move can be broken down into four cases known as the \textit{virtual 
Reidemeister moves}.
\[\includegraphics{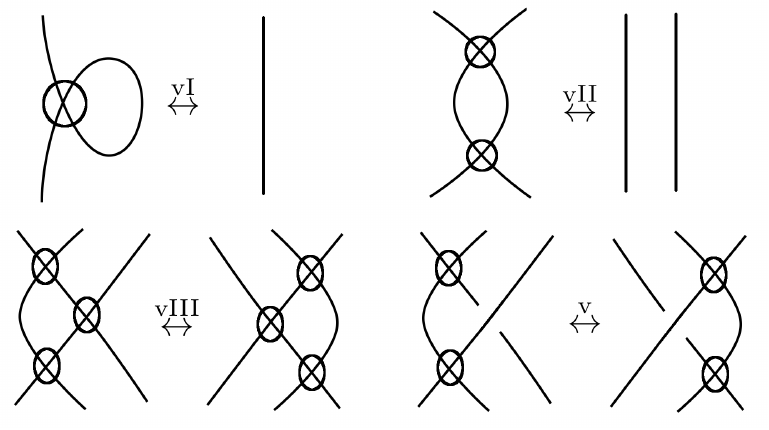}\]

A \textit{singular knot} has \textit{singular crossings} which are 4-valent 
vertices with a fixed cyclic ordering, known as \textit{rigid vertices}. 
Singular crossings interact with classical crossings via the \textit{singular 
Reidemeister moves}.
\[\scalebox{0.85}{\includegraphics{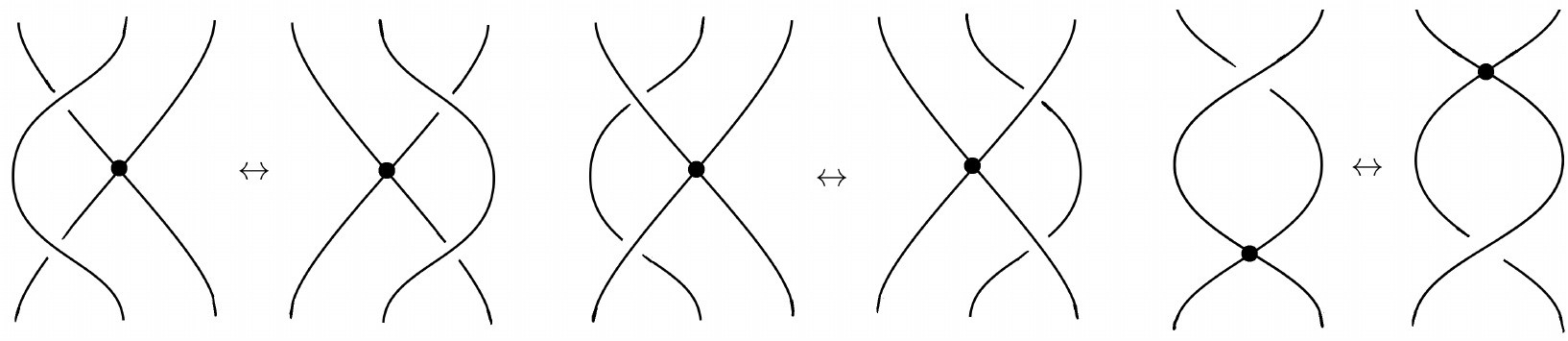}}\]

Singular knots are similar to 4-valent \textit{spatial graphs}, which may be 
understood as a quotient of singular knots with an extra move allowing 
edges at a 4-valent vertex to ``swivel'':
\[\scalebox{0.8}{\includegraphics{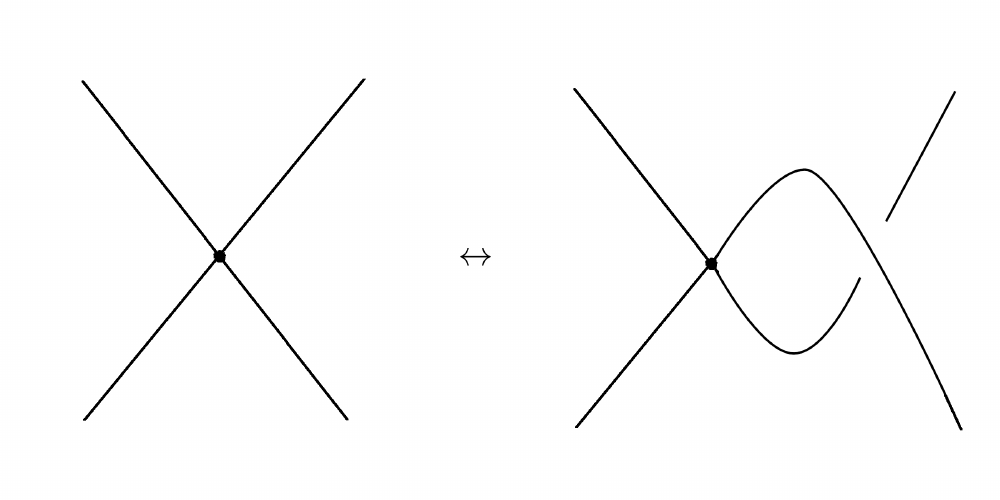}}\]
In this paper we will not allow the swivel move; all of our vertices will be 
rigid.

\textit{Pseudoknots} have \textit{precrossings} which are classical crossings
whose crossing sign is unknown. The idea originated in Biology, where pictures
of tiny physical knotted structures such as DNA strands enable some but not all
crossings to be resolved; see for example \cite{CDDK,EP,H,HHJJMR,LR,QR}. 
A precrossing may be conceptualized as a probability
distribution with the two resolutions as outcomes. Precrossings interact with
classical crossings via the \textit{pseudoknot Reidemeister moves}, which are 
the same as the singular Reidemeister moves with singular crossing replaced 
with precrossings plus one additional move, the pI move:
\[\includegraphics{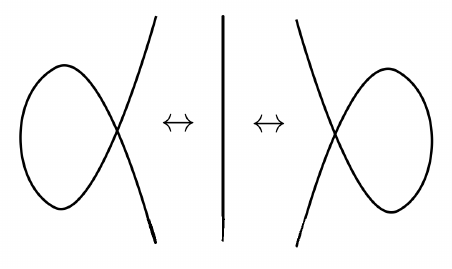}.\]

\section{Psyquandles and Biquandles}\label{PBR}

We begin with a definition from \cite{NOS}, slightly restated as in \cite{CN2}. We note that \cite{CN2} contains a misprint in axiom (i), where both $y$s should be $x$s.

\begin{definition}
Let $X$ be a set. A \textit{psyquandle structure} on $X$ consists of four binary
operations $\utr,\otr,\ud$ and $\od$ satisfying the following axioms:
\begin{itemize}
\item[(0)] The operations $\utr,\otr,\ud,\od$ are all right-invertible, i.e., there are operations $\utr^{-1}, \otr^{-1}, \ud^{-1}, \od^{-1}$ such that for all $x,y\in X$ we have
\[\begin{array}{rcccl}
(x\utr y)\utr^{-1} y & = & x & = & (x\utr^{-1}y)\utr y \\ 
(x\otr y)\otr^{-1} y & = & x & = & (x\otr^{-1}y)\otr y \\ 
(x\ud y)\ud^{-1} y & = & x & = & (x\ud^{-1}y)\ud y \ \mathrm{and}\\ 
(x\od y)\od^{-1} y & = & x & = & (x\od^{-1}y)\od y, \\ 
\end{array}\]
\item[(i)] For all $x\in X$, $x\utr x=x\otr x$,
\item[(ii)] For all $x,y\in X$, the maps $S,S':X\times X\to X\times X$ given by 
\[S(x,y)=(y\otr x,x\utr y) \quad\mathrm{and}\quad S'(x,y)=(y\od x,x\ud y)\] 
are invertible,
\item[(iii)] For all $x,y,z\in X$ we have the \textit{exchange laws}
\[\begin{array}{rcl}
(x\utr y)\utr(z\utr y) & = & (x\utr z)\utr(y\otr x) \\
(x\utr y)\otr(z\utr y) & = & (x\otr z)\utr(y\otr x) \\
(x\otr y)\otr(z\otr y) & = & (x\otr z)\otr(y\utr x), \\
\end{array}\]
\item[(iv)] For all $x,y\in X$ we have
\[\begin{array}{rcl}
x\ud((y\otr x)\od^{-1} x) & = & [(x\utr y)\od^{-1} y]\otr[(y\otr x)\ud^{-1} x]\\
y\ud((x\utr y)\od^{-1} y) & = & [(y\otr x)\od^{-1} x]\utr[(x\utr y)\od^{-1} y],
\end{array}\]
and
\item[(v)] For all $x,y,z\in X$ we have
\[\begin{array}{rcl}
(x\otr y)\otr (z\od y) & = & (x\otr z)\otr (y\ud z) \\
(x\utr y)\utr (z\od y) & = & (x\utr z)\utr (y\ud z) \\
(x\otr y)\od (z\otr y) & = & (x\od z)\otr (y\utr z) \\
(x\utr y)\ud (z\utr y) & = & (x\ud z)\utr (y\otr z) \\
(x\otr y)\ud (z\otr y) & = & (x\ud z)\otr (y\utr z) \\
(x\utr y)\od (z\utr y) & = & (x\od z)\utr (y\otr z).
\end{array}\]
\end{itemize}
A psyquandle which also satisfies 
\begin{itemize}
\item[(vi)] For all $x\in X$ we have 
\[x\ud x=x\od x\]
\end{itemize}
is said to be \textit{pI-adequate}.
\end{definition}

\begin{definition}
A set $X$ with operations $\utr, \otr$ satisfying conditions (0)-(iii) 
is a \emph{biquandle}.
\end{definition}

\begin{example}
Let $X$ be a module over the ring $\mathbb{Q}[t^{\pm 1},s^{\pm 1}]$ of 
Laurent polynomials in two variables with rational coefficients. 
Then $X$ is a pI-adequate psyquandle, known as a \textit{Jablan Psyquandle}, 
under the operations
\[\begin{array}{rcl}
x\utr y & = & tx+ (s-t) y \\
x\otr y & = & sx \\
x\ud y & = & \frac{s+t}{2} x+ \frac{s-t}{2} y\\
x\od y & = &\frac{s+t}{2} x+ \frac{s-t}{2} y.
\end{array}\]
More generally, $\mathbb{Q}$ can be replaced with any commutative ring with
identity in which $2$ is invertible, e.g. $\mathbb{Z}_9$. See \cite{NOS} for 
more.
\end{example}

\begin{example}
Given a finite set $X=\{1,2,\dots, n\}$, we can specify a psyquandle
structure on $X$ by explicitly listing the operation tables of the four
psyquandle operations. In practice it is convenient to put these together 
into an $n\times 4n$ bock matrix, so the psyquandle structure on $X=\{1,2,3\}$
specified by
\[
\begin{array}{r|rrr} \utr & 1 & 2 & 3 \\ \hline 1 & 2 & 2 & 2 \\ 2 & 3 & 3 & 3 \\ 3 & 1 & 1 & 1\end{array}\ \ 
\begin{array}{r|rrr} \otr & 1 & 2 & 3 \\ \hline 1 & 2 & 2 & 2 \\ 2 & 3 & 3 & 3 \\ 3 & 1 & 1 & 1\end{array}\ \ 
\begin{array}{r|rrr} \ud & 1 & 2 & 3 \\ \hline 1 & 3 & 3 & 3 \\ 2 & 1 & 1 & 1 \\ 3 & 2 & 2 & 2\end{array}\ \ 
\begin{array}{r|rrr} \od & 1 & 2 & 3 \\ \hline 1 & 3 & 3 & 3 \\ 2 & 1 & 1 & 1 \\ 3 & 2 & 2 & 2\end{array}
\]
is encoded as the block matrix
\[
\left[\begin{array}{rrr|rrr|rrr|rrr}
2 & 2 & 2 & 2 & 2 & 2 & 3 & 3 & 3 & 3 & 3 & 3 \\
3 & 3 & 3 & 3 & 3 & 3 & 1 & 1 & 1 & 1 & 1 & 1 \\
1 & 1 & 1 & 1 & 1 & 1 & 2 & 2 & 2 & 2 & 2 & 2
\end{array}\right].
\]
Such a psyquandle is pI-adequate if and only if the diagonals of the 3rd and 4th matrices
are equal.
\end{example}

The psyquandle axioms are motivated by the semiarc coloring rules for
singular knots and pseudoknots below:
\begin{equation}\raisebox{-0.5in}{\includegraphics{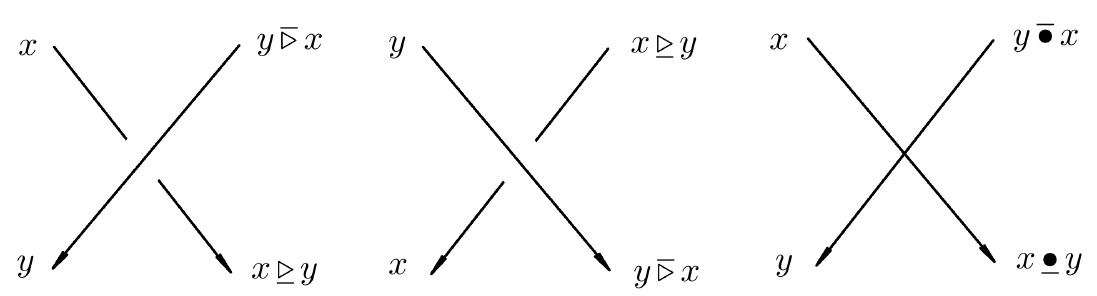}}\label{eq1}\end{equation}
The relationship between the psyquandle operations and their right inverses is 
illustrated below:
\begin{equation*}\raisebox{-0.5in}{\includegraphics{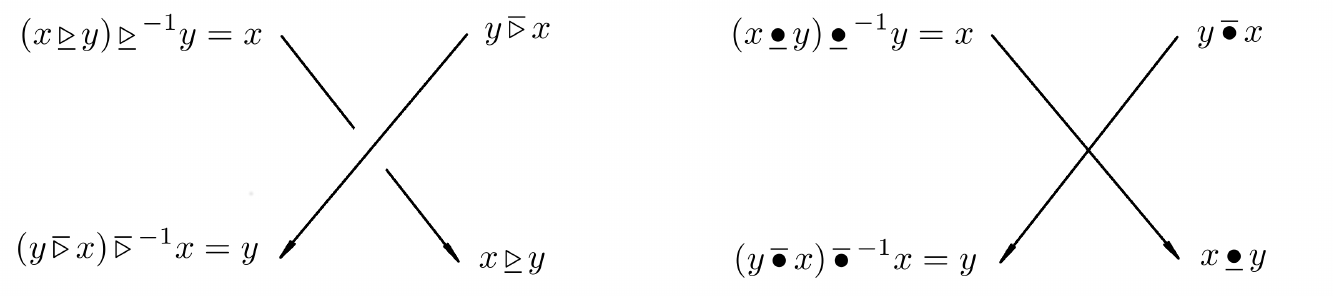}}\end{equation*}

\begin{definition}
Let $X$ be a finite psyquandle (respectively, a $pI$-adequate finite 
psyquandle). An \textit{$X$-coloring} of an oriented singular 
knot or link diagram $L$ (respectively, an oriented pseudoknot or 
pseudolink diagram $L$) is an assignment of an element of $X$ to each semiarc
in $L$ such that the coloring rules in (\ref{eq1}) are
satisfied at every crossing.
\end{definition}

\begin{definition}
Let $X,Y$ be finite psyquandles with operations denoted by $\utr_*, \otr_*, 
\ud_*$ and $\od_*$ for $*\in\{X,Y\}$. A map $f: X \to Y$ 
is  a \emph{psyquandle homomorphism} if the following conditions are satisfied 
for all $x, y \in X:$
\begin{eqnarray}
f(x\utr_X y ) & = &  f(x) \utr_Y f(y),\\
f(x\otr_X y ) & = & f(x) \otr_Y f(y),\\
f(x\ud_X y ) & = & f(x) \ud_Y f(y)\quad \mathrm{and}\\
f(x\od_X y ) & = &f(x) \od_Y f(y).
\end{eqnarray}
If $f:X \rightarrow X$ satisfies conditions (2)-(5), then $f$ is a \emph{psyquandle endomorphism}.
\end{definition}

Psyquandles are useful for defining invariants of oriented singular knots and links. For every oriented singular or pseudolink diagram $L$ there is an associated \emph{fundamental psyquandle of $L$} denoted by $\mathcal{P}(L)$ or $\mathcal{P}_I(L)$ for the \emph{fundamental pI-adequate psyquandle of $L$}. The fundamental psyquandle of $L$ is defined as the quotient of the free psyquandle on the set of semiarcs in the diagram $L$ by the equivalence relation generated by the relations determined by the crossings in the diagram $L$. For details of the construction of the fundamental psyquandle, see \cite{NOS}.

\begin{theorem}\cite{NOS}\label{counting}
Let $X$ be a finite psyquandle and let $L$ be an oriented singular knot or link diagram. Then the cardinality of the set of
$X$-colorings of $L$, 
\[\Phi_X^{\mathbb{Z}}(L)=|\mathrm{Hom}(\mathcal{P}(L),X)| \]
is an integer-valued invariant of singular knots and links.
\end{theorem}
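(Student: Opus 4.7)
The plan is to prove invariance by the standard two-step strategy used throughout coloring-invariant theory: first, identify $X$-colorings of a diagram $L$ with psyquandle homomorphisms $\mathcal{P}(L)\to X$ (this is already built into the statement via the equality $\Phi_X^{\mathbb{Z}}(L)=|\mathrm{Hom}(\mathcal{P}(L),X)|$); second, show that if $L$ and $L'$ differ by a single Reidemeister-type move (classical R1, R2, R3, plus the singular moves and, when working with $\mathcal{P}_I$, the pI move), then the counts $|\mathrm{Hom}(\mathcal{P}(L),X)|$ and $|\mathrm{Hom}(\mathcal{P}(L'),X)|$ agree. Since any two diagrams of the same singular (resp.\ pseudo) link are connected by a finite sequence of such moves, this yields the result.

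To carry out step two I would reduce to showing that $\mathcal{P}(L)$ and $\mathcal{P}(L')$ are isomorphic as psyquandles whenever $L\leftrightarrow L'$ is a single move. The generators of $\mathcal{P}(L)$ are the semiarcs and the relations are the coloring rules pictured in (\ref{eq1}). For each Reidemeister move I would write down the generators and local relations introduced on each side of the move, then exhibit a bijection between the two generator sets and verify that the relations on one side are consequences of the relations on the other, using exactly the corresponding psyquandle axiom. Concretely, axiom (i) matches R1, the invertibility in (0) and (ii) match the two orientations of R2, the exchange laws (iii) match the classical R3, axiom (iv) matches the singular/pseudo R2-type move that swaps a classical and a singular/precrossing strand, the six identities in (v) match the six distinct orientation/strand-type variants of the singular (and pseudo) R3 moves, and axiom (vi) matches the pI move that is extra in the pseudoknot case (so only needed for $\mathcal{P}_I$). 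Thus the axioms were designed precisely so that the local relations transform correctly under each move. Taking the disjoint union of a given $X$-coloring of $L$ outside the move region with the forced local coloring inside it produces a bijection on colorings.

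The only real content is the verification that each Reidemeister move translates into one of the listed axioms (or an algebraic consequence of them), and conversely that the given axioms suffice. For the classical and virtual portions this is the well-known biquandle story and I would cite it rather than redo it. The novel checks are (iv) for the mixed classical/singular move and the six identities in (v) for the singular R3 variants; here I would draw the local picture, label the four semiarcs entering the tangle with psyquandle generators $x,y,z$, compute the labels on the four semiarcs leaving the tangle on each side of the move by repeatedly applying the coloring rule (\ref{eq1}), and observe that the two outgoing tuples agree if and only if the corresponding equation in (iv) or (v) holds.

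The main obstacle is bookkeeping rather than ideas: singular R3 comes in several orientation/strand-type configurations and each produces one of the equations in (v), so one must be careful to match the correct configuration to the correct equation and to use the right-invertibility from (0) when the outgoing labels naturally involve $\utr^{-1},\otr^{-1},\ud^{-1},\od^{-1}$. Once the move-by-move correspondence is set up, invariance of $|\mathrm{Hom}(\mathcal{P}(L),X)|$ is immediate, and since this cardinality is a nonnegative integer we obtain the stated $\mathbb{Z}$-valued invariant. The pI-adequate case is handled identically with the additional check that axiom (vi) is exactly what is needed for the pI move.
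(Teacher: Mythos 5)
Your outline is correct and is essentially the same argument as the source: the paper gives no proof of this theorem, quoting it from \cite{NOS}, where invariance is established exactly as you describe---colorings are identified with homomorphisms $\mathcal{P}(L)\to X$, and each oriented classical/singular/pseudo Reidemeister move is checked to induce a bijection of colorings (equivalently, an isomorphism of fundamental psyquandles), the axioms having been designed move-by-move for this purpose. Your axiom-to-move matching ((0),(i)--(iii) for the classical moves, (iv)--(v) for the singular-type moves, (vi) for pI in the pI-adequate case) is the intended one, so the only remaining content is the case-by-case verification you already acknowledge.
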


Note that each element of $\mathrm{Hom}(\mathcal{P}(L),X)$ can be thought of as a coloring of $L$ which associates an element of $X$ to each semiarc in $L$ such that the psyquandle operations in $X$ are compatible with the relations determined by the crossings in $L$. The invariant defined in Theorem~\ref{counting} counts the number of $X$-colorings of $L$, therefore, the invariant is commonly known as the \emph{psyquandle counting invariant}. If $L$ is a diagram of an oriented link there is an associated \emph{fundamental biquandle of $L$}, denoted $\mathcal{BQ}(L)$. Furthermore, the fundamental biquandle of $L$ can be used to define the following \emph{biquandle counting invariant}, $\Phi_X^\mathbb{Z}(L) = | \mathrm{Hom}(\mathcal{BQ}(L),X)|$, for any finite biquandle $X$.

\section{Psyquandle Coloring Quivers} \label{PCQ}

In this section, we define quivers (i.e., directed graphs) associated to a triple $(L,X,S)$ consisting of an oriented singular link (respectively, pseudolink) $L$, a finite psyquandle (respectively, a pI-adequate psyquandle) $X$, and subset $S \subset \text{Hom}(X,X)$ of the ring of endomorphisms of $X$. 
This is a generalization of the quandle coloring quivers found in \cite{CN} associated to the triple $(L,X,S)$ where $L$ is an oriented link, $X$ is a finite quandle and $S\subset\mathrm{Hom}(X,X)$ is a set of quandle endomorphisms.

\begin{definition}
Let $X$ be a finite psyquandle and $L$ an oriented singular link. For any set of psyquandle endomorphisms $S \subset \mathrm{Hom}(X,X)$, the associated \emph{psyquandle coloring quiver}, denoted by $\mathcal{PQ}_X^S(L)$, is the directed graph with a vertex for every element $f \in \mathrm{Hom}(\mathcal{P}(L), X)$ and an edge directed from $f$ to $g$ where $g = \phi f$ for an element $\phi \in S$. In the case when $S = \mathrm{Hom}(\mathcal{P}(L),X)$ we will call this the \emph{full psyquandle coloring quiver} of $L$ with respect to $X$, denoted by $\mathcal{PQ}_X(L)$. Furthermore, in the case when $S=\{ \phi\}$ is a singleton, we will denote the quiver associated to the singular link and psyquandle by $\mathcal{PQ}_X^\phi(L)$.
\end{definition}

In the case where $X$ is a finite pI-adequate psyquandle with $S\subset \mathrm{Hom}(X,X)$ and $L$ is an oriented pseudolink, we will still denote the \emph{pI-adequate psyquandle coloring quiver} of $L$ with respect to $X$ by $\mathcal{PQ}_X^S(L)$; we note that for a psyquandle to define invariants for pseudolinks, it must be pI-adequate. 

\begin{theorem}
Let $X$ be a finite psyquandle, $S \subset \mathrm{Hom}(X,X)$ and $L$ an oriented singular knot. Then the quiver $\mathcal{PQ}_X^S(L)$ is an invariant of $L$.
\end{theorem}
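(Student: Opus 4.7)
The plan is to show that any singular Reidemeister move relating diagrams $L$ and $L'$ induces an isomorphism of directed graphs $\mathcal{PQ}_X^S(L)\cong \mathcal{PQ}_X^S(L')$. This splits into two tasks: producing a bijection on vertex sets, and checking that this bijection preserves the edge relation.

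For the vertex bijection, I would invoke the fact that the fundamental psyquandle $\mathcal{P}(L)$ is itself an invariant of $L$ up to isomorphism. This is built into the construction: $\mathcal{P}(L)$ has a generator for each semiarc and a relation for each crossing coming from the coloring rules in (\ref{eq1}), and the psyquandle axioms were designed precisely so that the singular Reidemeister moves translate to Tietze transformations of the resulting presentation. Thus any move relating $L$ to $L'$ gives a psyquandle isomorphism $\psi:\mathcal{P}(L)\to\mathcal{P}(L')$. Precomposition with $\psi$ defines the bijection
\[
\Psi:\mathrm{Hom}(\mathcal{P}(L'),X)\to \mathrm{Hom}(\mathcal{P}(L),X),\qquad \Psi(f)=f\circ\psi,
\]
with inverse induced by $\psi^{-1}$. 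This is exactly the bijection underlying Theorem \ref{counting}.

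For the edge structure, I would check that $\Psi$ transports the relation "$g=\phi\circ f$" defining the edges. If $f\to g$ is an edge in $\mathcal{PQ}_X^S(L')$, then $g=\phi\circ f$ for some $\phi\in S$, so
\[
\Psi(g)=g\circ\psi=\phi\circ f\circ\psi=\phi\circ\Psi(f),
\]
witnessing an edge $\Psi(f)\to\Psi(g)$ in $\mathcal{PQ}_X^S(L)$ labelled by the same $\phi$. The reverse implication is symmetric via $\psi^{-1}$. Hence $\Psi$ upgrades to a quiver isomorphism, and by applying this to each of the virtual, classical, and singular Reidemeister moves in turn we conclude that $\mathcal{PQ}_X^S(L)$ depends only on the equivalence class of $L$.

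There is no serious obstacle here; the only point that requires care is the distinction between the two composition directions. The isomorphism $\psi$ acts on the domain of colorings, so it is pulled back by precomposition, whereas the elements of $S$ act on the codomain $X$ and are applied by postcomposition. Because function composition is associative these two actions commute automatically, which is what makes the edge-preservation step essentially formal. The whole argument is an application of the contravariant functor $\mathrm{Hom}(-,X)$ combined with the invariance of the fundamental psyquandle.
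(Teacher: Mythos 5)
Your proposal is correct and follows essentially the same route as the paper: the paper's (much terser) proof likewise rests on the invariance of $\mathrm{Hom}(\mathcal{P}(L),X)$ coming from the invariance of the fundamental psyquandle, together with the observation that the edge relation is defined by postcomposition with elements of $S$ and is therefore carried along by the identification of coloring sets. Your write-up simply makes explicit the precomposition bijection $\Psi(f)=f\circ\psi$ and the associativity argument $\phi\circ(f\circ\psi)=(\phi\circ f)\circ\psi$ that the paper leaves implicit.
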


\begin{proof}
We observe that for any psyquandle coloring $f \in \mathrm{Hom}(\mathcal{P}(L), X)$ and $\phi:X\to X$, the conditions required for $g = \phi f$ to be a valid psyquandle coloring are precisely the conditions for $f$ to be a psyquandle endomorphism. Since the quiver $\mathcal{PQ}_X^S$ is determined up to isomorphism by $X$ and $\textup{Hom}(\mathcal{P}(L),X)$, the quiver is an invariant of $L$.
\end{proof}

\begin{corollary}
Any invariant of directed graphs applied to $\mathcal{PQ}_X^S(L)$ defines an invariant of oriented singular links. 
\end{corollary}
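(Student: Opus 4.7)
The plan is to deduce this as a direct consequence of the preceding theorem, with essentially no new content. The theorem establishes that the assignment $L \mapsto \mathcal{PQ}_X^S(L)$ is well-defined on equivalence classes of oriented singular link diagrams under singular Reidemeister moves; in other words, if $L$ and $L'$ are singular-Reidemeister equivalent, then $\mathcal{PQ}_X^S(L) \cong \mathcal{PQ}_X^S(L')$ as directed graphs. So the corollary is a routine composition-of-invariants argument.

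First I would set up notation: let $\Gamma$ denote the class of directed graphs and let $\sim$ denote isomorphism of directed graphs, and suppose $\iota:\Gamma/\!\sim\ \to V$ is any directed-graph invariant taking values in some set $V$. Next I would note that by the previous theorem, the map $L \mapsto [\mathcal{PQ}_X^S(L)]_{\sim}$ from isomorphism classes of oriented singular links to $\Gamma/\!\sim$ is well-defined. Composing, the map $L \mapsto \iota\bigl([\mathcal{PQ}_X^S(L)]_{\sim}\bigr)$ is a well-defined function on isomorphism classes of oriented singular links, hence an invariant taking values in $V$.

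There is essentially no obstacle here, since $\iota$ being a directed-graph invariant means by definition that it is constant on isomorphism classes of directed graphs, so applying it after the quiver invariant yields an invariant of $L$. The only remark worth including is that in the pseudolink case the same argument applies, provided $X$ is taken to be pI-adequate, since then the corresponding quiver $\mathcal{PQ}_X^S(L)$ is an invariant of pseudolinks by the same reasoning. This is a half-sentence of actual proof, so I would keep the written proof to one or two sentences rather than padding it out.
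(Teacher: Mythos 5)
Your proposal is correct and matches the paper's (implicit) reasoning exactly: the paper offers no separate proof, treating the corollary as an immediate consequence of the preceding theorem, which is precisely your composition-of-invariants argument. Keeping it to a sentence or two is appropriate.
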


\begin{example}\label{Qui1}
Consider the psyquandle $X$ with operation matrix
\[ \left[
\begin{array}{cccc|cccc|cccc|cccc}
 1 & 1 & 1 & 1 &  1 & 1 & 1 & 1 &  1 & 1 & 1 & 1 &  1 & 1 & 1 & 1\\
 3 & 3 & 3 & 3 &  3 & 3 & 3 & 3 &  2 & 2 & 2 & 2 &  2 & 2 & 2 & 2\\
 2 & 2 & 2 & 2 &  2 & 2 & 2 & 2 &  3 & 3 & 3 & 3 &  3 & 3 & 3 & 3\\
 4 & 4 & 4 & 4 &  4 & 4 & 4 & 4 &  4 & 4 & 4 & 4 &  4 & 4 & 4 & 4\\
\end{array}
\right]. 
\]
The \emph{2-bouquet graph of type L} listed as $1^l_1$ in \cite{O}
\[
\includegraphics[scale=1]{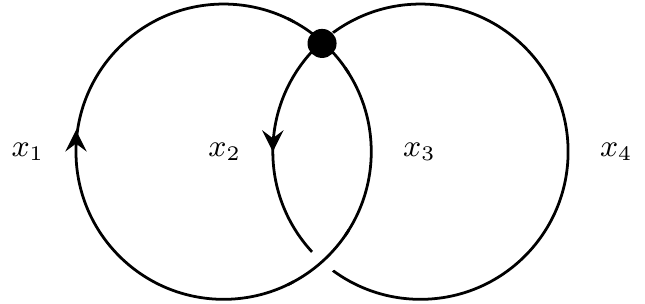}
\]
%
%
%
%
has 4 colorings by the psyquandle $X$, each of which we can identify as a 4-tuple $(f(x_1),f(x_2),f(x_3),f(x_4)):$
\[ \textup{Hom}(\mathcal{P}(1^l_1),X) =\lbrace (1, 1, 1, 1), (1, 4, 1, 4), (4, 1, 4, 1), (4, 4, 4, 4)\rbrace.\]
The psyquandle endomorphism $\phi: X \rightarrow X$ defined by $\phi(1)=\phi(4)=1$ and $\phi(2)=\phi(3)=4$ yields the following psyquandle quiver $\mathcal{PQ}_X^{\phi}(1^l_1)$

\[
\begin{tikzpicture}[node distance=2cm, auto]
  \node (P) {$(1, 4, 1, 4)$};
  \node (B) [right of=P] {$(4, 1, 4, 1)$};
  \node (A) [right of=B] {$(4, 4, 4, 4)$};
  \node (C) [below of=B] {$(1, 1, 1, 1)$};
  \draw[->] (P) to node  {} (C);
  \draw[->] (B) to node  {} (C);
  \draw[->] (A) to node  {} (C);
  \draw[->] (C) to [out=330,in=210,looseness=5] (C);
\end{tikzpicture}
\hspace{2cm}
\begin{tikzpicture}
\centering
\node at (0,1.5) {or};
\node at (0,0) {};
\end{tikzpicture}
\hspace{2cm}
\includegraphics[scale=.5]{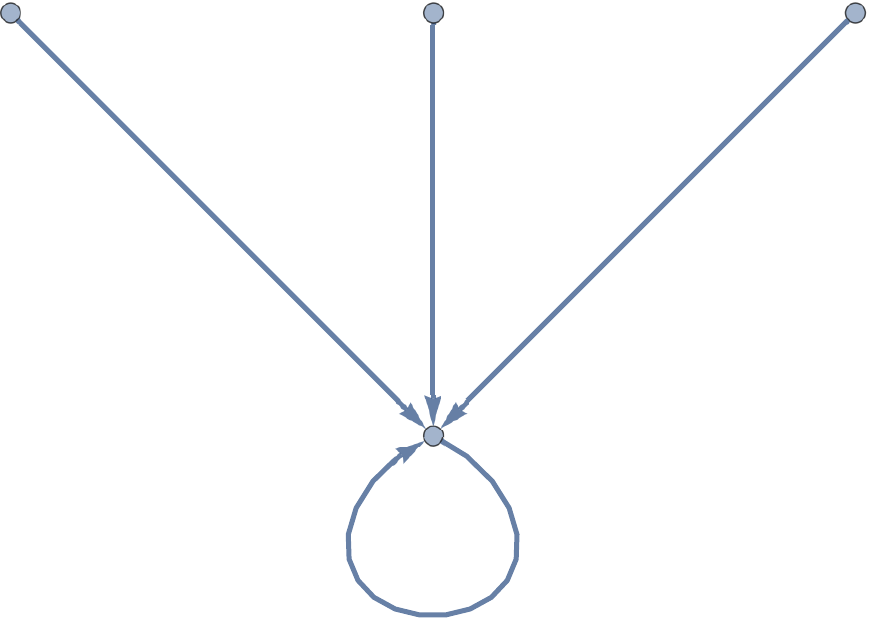}
\]
\end{example}

\begin{example}\label{Qui2}
Note that the psyquandle in Example~\ref{Qui1} is pI-adequate since the two right blocks have the same diagonal. The pseudoknot $3_1.1$ listed in \cite{HHJJMR}

\[\includegraphics[scale=1]{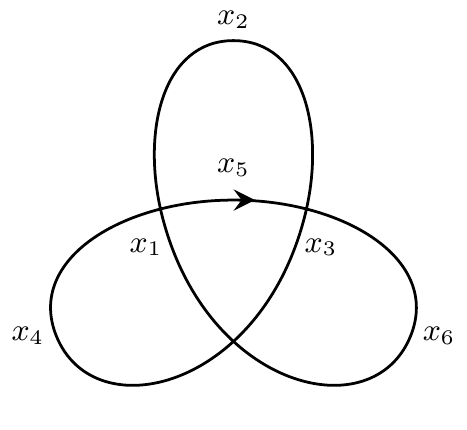} \]
%
%
has 4 colorings by the pI-adequate psyquandle $X$, each of which we can identify as a 6-tuple $(f(x_1),f(x_2),\allowbreak f(x_3),f(x_4),f(x_5),f(x_6)):$
\[ \textup{Hom}(\mathcal{P}_I(3_1.1),X) =\lbrace (1, 1, 1, 1, 1, 1),(2, 2, 2, 2, 2, 2),(3, 3, 3, 3, 3, 3),(4, 4, 4, 4, 4, 4)\rbrace.\]
Setting $S=\lbrace \phi_1, \phi_2 \rbrace$ where $\phi_1, \phi_2: X \rightarrow X$ are psyquandle endomorphisms defined by $\phi_1(1)=\phi_1(2)=\phi_1(3)=1$ and $\phi_1(4)=4$ and $\phi_2(1)=\phi_2(2)=\phi_2(3)=4$ and $\phi_2(4)=1$ yields the following pI-adequate psyquandle quiver $\mathcal{PQ}_{X}^{S}(3_1.1)$

\[
\begin{tikzpicture}[node distance=2cm, auto]
  \node (P) {$(2, 2, 2, 2, 2, 2)$};
  \node (B) [right of=P, above of=P] {$(1, 1, 1, 1, 1, 1)$};
  \node (A) [right of=B, below of=B] {$(3, 3, 3, 3, 3, 3)$};
  \node (C) [right of=P, below of=P] {$(4, 4, 4, 4, 4, 4)$};
  \draw[->] (P) to node  {} (B);
  \draw[->] (P) to node  {} (C);
  \draw[->] (A) to node  {} (B);
  \draw[->] (A) to node  {} (C);
  \draw[->] (B) to [out=150,in=30,looseness=5] (B);
  \draw[->] (C) to [out=330,in=210,looseness=5] (C);
  \draw[->] (C) to [out=100,in=260,looseness=1] (B);
  \draw[->] (B) to [out=280,in=80,looseness=1] (C);
\end{tikzpicture}
\hspace{1cm}
\begin{tikzpicture}
\centering
\node at (0,1.5) {or};
\node at (0,0) {};
\end{tikzpicture}
\hspace{1cm}
\includegraphics[scale=.7]{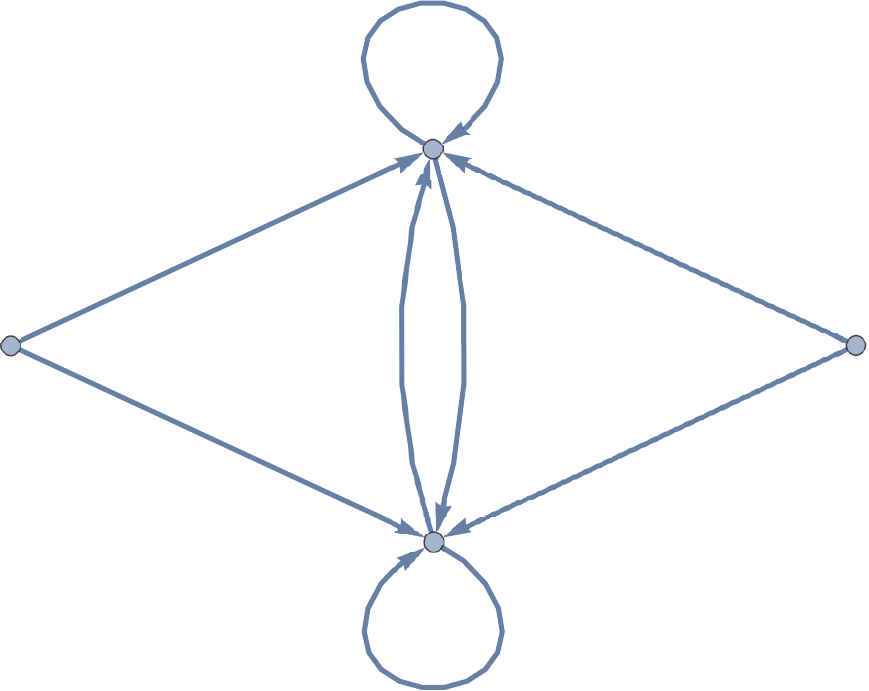}
\]
\end{example}

\section{Biquandle Coloring Quivers}\label{BCQ}

We have defined psyquandle coloring quiver invariants for singular knots and 
links and for pseudoknots in the case of pI-adequate psyquandles. However, 
these invariants are actually more versatile than they may initially appear:

\begin{itemize}
\item A classical knot or link can be regarded as a singular knot or link with
an empty set of singular crossings. In this case, for any psyquandle $X$ with $S \in \textup{Hom}(X,X)$, 
colorings simply do not use the $\ud,\od$ operations and hence we have the 
\textit{biquandle coloring quiver invariant}, denoted by $\mathcal{BQ}_X^S$.
\item A virtual knot or link can similarly be regarded as a virtual singular
knot with empty set of singular crossings. By the usual technique of ignoring
virtual crossings for coloring purposes, we obtain biquandle coloring quivers
for virtual knot and links.
\item Extending virtual knot theory to the cases of virtual singular 
knots and virtual pseudoknots respectively by allowing singular crossings 
and precrossings respectively in the virtual detour move, psyquandle
coloring quivers and in-degree polynomials define invariants for these objects
as well.
\end{itemize}

\begin{example}
Consider the biquandle $X$ with operation matrix given the left two blocks of the operation matrix in Example~\ref{Qui1}. The classical right-handed trefoil $3_1$

\[ \includegraphics[scale=1]{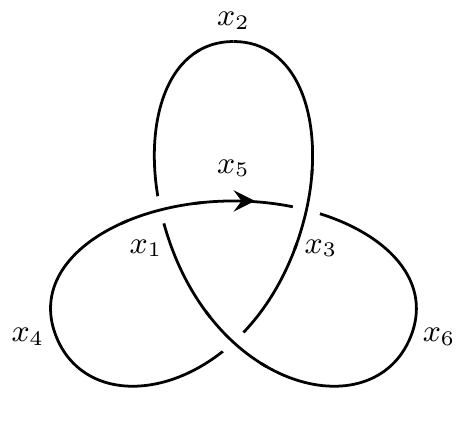} \]
%
has 4 colorings by biquandle $X$, each of which we can identify as a 6-tuple $(f(x_1),f(x_2),\allowbreak f(x_3),f(x_4),f(x_5),f(x_6)):$
\[ \textup{Hom}(\mathcal{BQ}(3_1),X) =\lbrace (1, 1, 1, 1, 1, 1),(2, 3, 2, 3, 2, 3),(3, 2, 3, 2, 3, 2),(4, 4, 4, 4, 4, 4)\rbrace.\]
The biquandle endomorphism $\phi: X \rightarrow X$ defined by $\phi(1)=\phi(4)=1$ and $\phi(2)=\phi(3)=4$ yields the following biquandle quiver $\mathcal{BQ}_X^{\phi}(3_1)$
\[
\begin{tikzpicture}[node distance=2.2cm, auto]
  \node (P) {$(2,3,2,3,2,3)$};
  \node (B) [right of=P] {$(3,2,3,2,3,2)$};
  \node (A) [below of=B] {$(4,4,4,4,4,4)$};
  \node (C) [below of=A] {$(1,1,1,1,1,1)$};
  \draw[->] (P) to node  {} (A);
  \draw[->] (B) to node  {} (A);
  \draw[->] (A) to node  {} (C);
  \draw[->] (A) to node  {} (C);
  \draw[->] (C) to [out=330,in=210,looseness=5] (C);
\end{tikzpicture}
\hspace{1cm}
\begin{tikzpicture}
\centering
\node at (0,2.5) {or};
\node at (0,0) {};
\end{tikzpicture}
\hspace{1cm}
  \includegraphics[scale=.5]{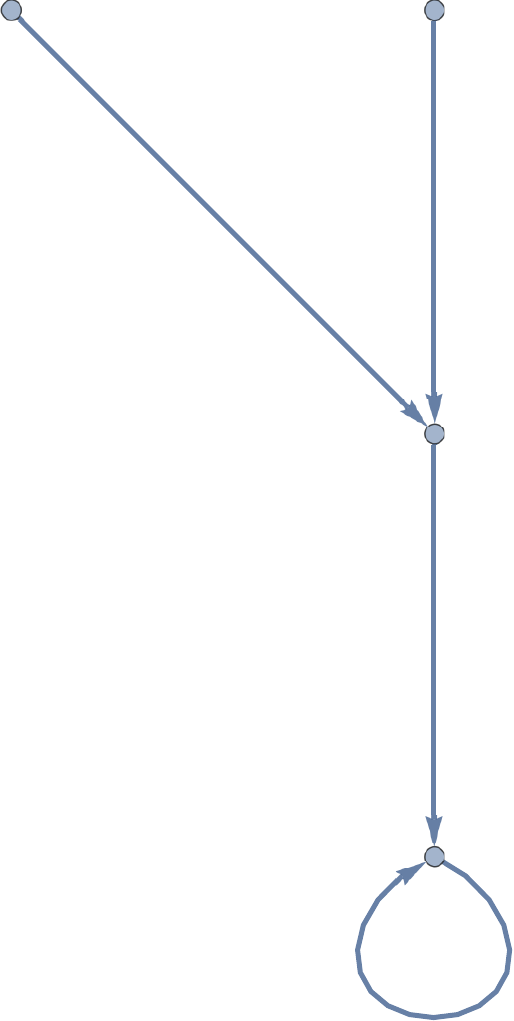}
\]
\end{example}

Note that in a psyquandle coloring quiver (respectively, biquandle coloring quiver), the number of edges out of a vertex will correspond to the cardinality of $S$. On the other hand, the number of edges in to a vertex may be different. We will refer to the number of edges in to a vertex as the \emph{in-degree} of the vertex and will denoted by $\text{deg}^+$. We can use the in-degree of each vertex of a psyquandle coloring quiver (respectively, biquandle coloring quiver) to define the following polynomial invariant of links.


\begin{definition}
Let $X$ be a finite psyquandle (respectively, a pI-adequate psyquandle), $S \subset \text{Hom}(X,X)$ a set of psyquandle endomorphisms, $L$ an oriented singular link (respectively, pseudolink) and $\mathcal{PQ}_X^{S}(L)$ the associated psyquandle coloring quiver of $L$ (respectively, pI-adequate psyquandle quiver of $L$) with set of vertices $V(\mathcal{PQ}_X^S(L))$. Then the \emph{in-degree quiver polynomial} of $L$ with respect to $X$ is 
\[ \Phi_X^{\text{deg}^+, S}(L) = \sum_{f \in V(\mathcal{PQ}_X^S(L))}u^{\text{deg}^+(f)}. \]
In the case that $X$ is a finite biquandle and $L$ is an oriented link or virtual link, then the \emph{in-degree quiver polynomial} of $L$ with respect to $X$ is 
\[\Phi_{X}^{\text{deg}^+, S}(L) = \sum_{f \in V(\mathcal{BQ}_{X}^S(L))}u^{\text{deg}^+(f)}.\]
If $S= \{ \phi \}$ is a singleton we will denote the in-degree quiver polynomial of $L$ as $\Phi_X^{deg^+,\phi}(L)$ and if $S= \mathrm{Hom}(X,X)$ we will denote the in-degree polynomial by $\Phi_X^{deg^+}(L)$.
\end{definition} 

By construction, we have

\begin{corollary}
The in-degree quiver polynomials are invariants of singular knots and links in the case of psyquandles, of pseudoknots in the case of PI-adequate psyquandles, and of classical and virtual knots and links in the case of biquandles.
\end{corollary}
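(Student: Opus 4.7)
The plan is to reduce the statement to two separate facts: (a) the coloring quivers themselves are invariants of the appropriate class of diagrams, and (b) the in-degree polynomial is an isomorphism invariant of directed graphs. Since (a) has already been established in the preceding theorem (for singular links via finite psyquandles, and, as noted in the paragraph following its proof, for pseudolinks via pI-adequate psyquandles and for classical/virtual links via biquandles by restricting to the biquandle coloring quiver), the work to be done is essentially (b), after which a one-line composition gives the corollary.

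First I would fix an oriented singular link $L$, a finite psyquandle $X$, and a set $S \subset \mathrm{Hom}(X,X)$, and invoke the theorem to obtain the quiver $\mathcal{PQ}_X^S(L)$ as an invariant up to isomorphism of directed graphs. I would then observe that any isomorphism $\psi: Q \to Q'$ of directed graphs preserves in-degrees: for each vertex $v$, the map $\psi$ restricts to a bijection between edges of $Q$ ending at $v$ and edges of $Q'$ ending at $\psi(v)$, so $\deg^+(v) = \deg^+(\psi(v))$. Consequently the multisets $\{\deg^+(v) : v \in V(Q)\}$ and $\{\deg^+(v') : v' \in V(Q')\}$ coincide, and therefore the generating polynomial
\[
\sum_{v \in V(Q)} u^{\deg^+(v)} \;=\; \sum_{v' \in V(Q')} u^{\deg^+(v')}
\]
is invariant under directed graph isomorphism.

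Combining the two facts, I would conclude that $\Phi_X^{\deg^+,S}(L)$ depends only on the isomorphism class of $\mathcal{PQ}_X^S(L)$, which in turn depends only on the invariant data of $L$; hence $\Phi_X^{\deg^+,S}(L)$ is itself an invariant of $L$. The identical argument applies in the pI-adequate/pseudolink setting and in the biquandle/classical-or-virtual setting by using the corresponding coloring quiver in place of $\mathcal{PQ}_X^S(L)$, so all three cases of the corollary follow at once.

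There is no real obstacle to the proof, which is why the statement is phrased as a corollary "by construction"; the only point worth stating carefully is that in-degree really is an isomorphism invariant of directed graphs, since the definition of $\Phi_X^{\deg^+,S}(L)$ as a sum over vertices could superficially appear to depend on the concrete vertex labels (namely the colorings $f$), but in fact only the in-degree sequence is used.
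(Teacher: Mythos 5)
Your proposal is correct and matches the paper's reasoning: the paper proves this corollary ``by construction,'' i.e., by combining the invariance of the coloring quiver (the preceding theorem and its extensions to pI-adequate psyquandles and biquandles) with the fact that the in-degree polynomial depends only on the isomorphism class of the directed graph. You simply spell out the easy graph-theoretic step that the paper leaves implicit, which is fine.
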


\section{Examples}\label{E}

In this section we collect some additional examples and computations of the
new invariants. The examples were computed using our custom \texttt{python} code.

\begin{example}\label{inoutpolyex1}
Let $X$ be the psyquandle with the following operation matrix 

\[\left[
\begin{array}{cccccccc|cccccccc|cccccccc|cccccccc}
 1 & 4 & 2 & 3 & 3 & 2 & 1 & 4  &  1 & 1 & 1 & 1 & 1 & 1 & 1 & 1  &  1 & 4 & 2 & 3 & 1 & 1 & 1 & 1  &  1 & 1 & 1 & 1 & 3 & 2 & 1 & 4 \\
 3 & 2 & 4 & 1 & 4 & 1 & 2 & 3  &  2 & 2 & 2 & 2 & 2 & 2 & 2 & 2  &  3 & 2 & 4 & 1 & 2 & 2 & 2 & 2  &  2 & 2 & 2 & 2 & 4 & 1 & 2 & 3 \\
 4 & 1 & 3 & 2 & 1 & 4 & 3 & 2  &  3 & 3 & 3 & 3 & 3 & 3 & 3 & 3  &  4 & 1 & 3 & 2 & 3 & 3 & 3 & 3  &  3 & 3 & 3 & 3 & 1 & 4 & 3 & 2 \\
 2 & 3 & 1 & 4 & 2 & 3 & 4 & 1  &  4 & 4 & 4 & 4 & 4 & 4 & 4 & 4  &  2 & 3 & 1 & 4 & 4 & 4 & 4 & 4  &  4 & 4 & 4 & 4 & 2 & 3 & 4 & 1 \\
 8 & 8 & 8 & 8 & 5 & 5 & 5 & 5  &  5 & 5 & 5 & 5 & 5 & 5 & 5 & 5  &  5 & 5 & 5 & 5 & 5 & 5 & 5 & 5  &  8 & 8 & 8 & 8 & 5 & 5 & 5 & 5 \\
 5 & 5 & 5 & 5 & 6 & 6 & 6 & 6  &  6 & 6 & 6 & 6 & 6 & 6 & 6 & 6  &  6 & 6 & 6 & 6 & 6 & 6 & 6 & 6  &  5 & 5 & 5 & 5 & 6 & 6 & 6 & 6 \\
 7 & 7 & 7 & 7 & 7 & 7 & 7 & 7  &  7 & 7 & 7 & 7 & 7 & 7 & 7 & 7  &  7 & 7 & 7 & 7 & 7 & 7 & 7 & 7  &  7 & 7 & 7 & 7 & 7 & 7 & 7 & 7 \\
 6 & 6 & 6 & 6 & 8 & 8 & 8 & 8  &  8 & 8 & 8 & 8 & 8 & 8 & 8 & 8  &  8 & 8 & 8 & 8 & 8 & 8 & 8 & 8  &  6 & 6 & 6 & 6 & 8 & 8 & 8 & 8 \\
\end{array}
\right].\]
The reader can verify that the map $\phi:X \rightarrow X$ defined by setting $\phi(1)=\phi(2)= \phi(3)=\phi(4)=\phi(7)=3$ and $\phi(5)=\phi(6)=\phi(8)=7$ is a psyquandle endomorphism. 

\[ \includegraphics[scale=1]{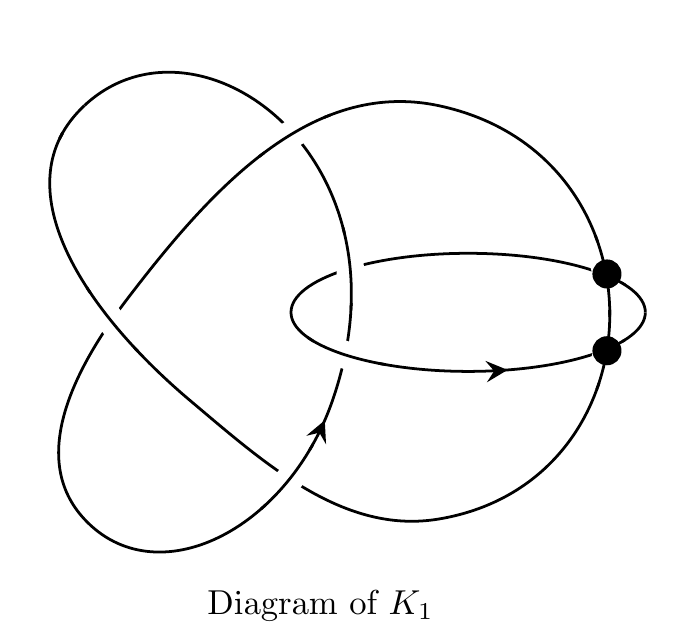} \hspace{3cm} \includegraphics[scale=1]{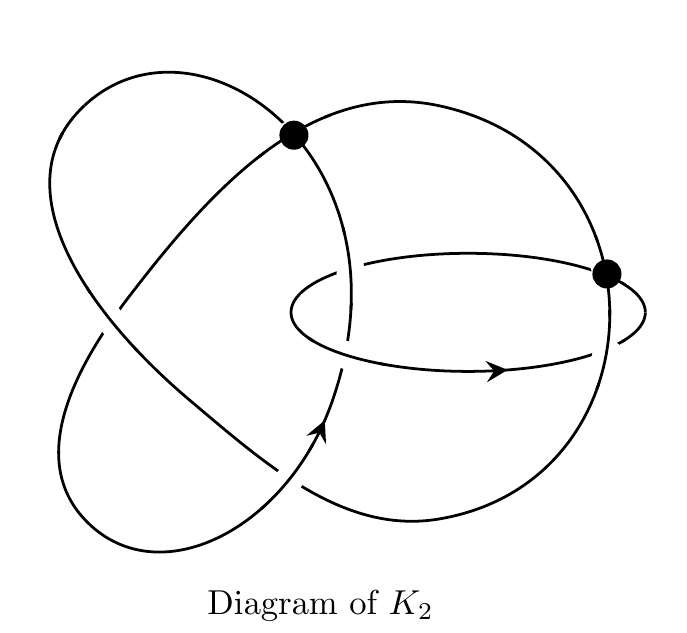} \]
Then the singular links have equal $\Phi_X^{\mathbb{Z}}(K_1) = 52 = \Phi_X^{\mathbb{Z}}(K_2)$, but the singular links are distinguished by their in-degree psyquandle quiver polynomial,
\begin{eqnarray*}
\Phi_X^{\text{deg}^+,\phi}(K_1) &=& u^{25}+u^{15}+u^{9}+u^3+48 \neq\\
 \Phi_X^{\text{deg}^+,\phi}(K_2)&=& 2u^{15}+u^{13}+u^9+48.
\end{eqnarray*}
\end{example}

\begin{example}
Consider the psyquandle in Example~\ref{inoutpolyex1}. We compute the in-degree psyquandle quiver polynomial for a choice of orientations for all prime 2-bouquet graphs of type L with up to six crossings from \cite{O}. We note that in this example we will consider the full psyquandle coloring quiver for each singular link.
\begin{center}
\begin{tabular}{ r| l }
$L$ & $\Phi_X^{\text{deg}^+}(L)$ \\
\hline
$1_1^l$	            & $u^{708} + 3u^{388} + 12u^{164} + 4u^{144} + 8u^{88} + 24u^{12}$ \\
$3_1^l$  	        & $u^{516} + 3u^{292} + 4u^{96} + 12u^{68} + 8u^{40}$	\\
$4_1^l$ 	        & $u^{708} + 3u^{388} + 12u^{164} + 4u^{144} + 8u^{88} + 24u^{12}$ 	\\	
$5_1^l$ 	        & $u^{516} + 3u^{292} + 4u^{96} + 12u^{68} + 8u^{40}$ 	\\
$5_2^l$  		    & $u^{1044} + 3u^{580} + 4u^{192} + 12u^{164} + 8u^{88} + 36u^{12}$ 	\\
$5_3^l$             & $u^{708} + 3u^{388} + 12u^{164} + 4u^{144} + 8u^{88} + 24u^{12}$  \\
$6_1^l$  		    & $u^{852} + 3u^{484} + 4u^{144} + 12u^{68} + 8u^{40} + 12u^{12}$ 	\\
$6_2^l$  		    & $u^{708} + 3u^{388} + 12u^{164} + 4u^{144} + 8u^{88} + 24u^{12}$ 	\\
$6_3^l$  		    & $u^{516} + 3u^{292} + 4u^{96} + 12u^{68} + 8u^{40}$ 	\\
$6_4^l$  		    & $u^{1044} + 3u^{580} + 4u^{192} + 12u^{164} + 8u^{88} + 36u^{12}$ 	\\
$6_5^l$  		    & $u^{1044} + 3u^{580} + 4u^{192} + 12u^{164} + 8u^{88} + 36u^{12}$ 	\\
$6_6^l$  		    & $u^{708} + 3u^{388} + 12u^{164} + 4u^{144} + 8u^{88} + 24u^{12}$ 	\\
$6_7^l$  		    & $u^{1044} + 3u^{580} + 4u^{192} + 12u^{164} + 8u^{88} + 36u^{12}$ 	\\
$6_8^l$  		    & $u^{612} + 3u^{340} + 4u^{120} + 12u^{116} + 8u^{64} + 12u^{12}$ 	\\
$6_9^l$  		    & $u^{852} + 3u^{484} + 4u^{144} + 12u^{68} + 8u^{40} + 12u^{12}$ 	\\
$6_{10}^l$  		& $u^{612} + 3u^{340} + 4u^{120} + 12u^{116} + 8u^{64} + 12u^{12}$ 	\\
$6_{11}^l$  		& $u^{612} + 3u^{340} + 4u^{120} + 12u^{116} + 8u^{64} + 12u^{12}$ 	\\
$6_{12}^l$  		& $u^{516} + 3u^{292} + 4u^{96} + 12u^{68} + 8u^{40}$.
\end{tabular}
\end{center}
\end{example}

The following examples illustrates that the in-degree psyquandle quiver polynomial is also an effective invariant of pseudolinks. Note that in this case we have to start with a pI-adequate psyquandle in order to obtain an invariant of pseudoknots and links.
\begin{example}
Let $X$ be the psyquandle with operation matrix below,
\[
\left[
\begin{array}{cccccccc|cccccccc|cccccccc|cccccccc}
 1& 4& 2& 3& 3& 2& 1& 4   &   1& 1& 1& 1& 1& 1& 1& 1   &  1& 1& 1& 1& 4& 3& 1& 2  &  1& 4& 2& 3& 4& 3& 1& 2\\
 3& 2& 4& 1& 4& 1& 2& 3   &   2& 2& 2& 2& 2& 2& 2& 2   &  2& 2& 2& 2& 3& 4& 2& 1  &  3& 2& 4& 1& 3& 4& 2& 1\\
 4& 1& 3& 2& 1& 4& 3& 2   &   3& 3& 3& 3& 3& 3& 3& 3   &  3& 3& 3& 3& 2& 1& 3& 4  &  4& 1& 3& 2& 2& 1& 3& 4\\
 2& 3& 1& 4& 2& 3& 4& 1   &   4& 4& 4& 4& 4& 4& 4& 4   &  4& 4& 4& 4& 1& 2& 4& 3  &  2& 3& 1& 4& 1& 2& 4& 3\\
 8& 8& 8& 8& 5& 5& 5& 5   &   5& 5& 5& 5& 5& 5& 5& 5   &  6& 6& 6& 6& 5& 7& 8& 6  &  6& 6& 6& 6& 5& 7& 8& 6\\
 5& 5& 5& 5& 6& 6& 6& 6   &   6& 6& 6& 6& 6& 6& 6& 6   &  8& 8& 8& 8& 8& 6& 5& 7  &  8& 8& 8& 8& 8& 6& 5& 7\\
 7& 7& 7& 7& 7& 7& 7& 7   &   7& 7& 7& 7& 7& 7& 7& 7   &  7& 7& 7& 7& 6& 8& 7& 5  &  7& 7& 7& 7& 6& 8& 7& 5\\
 6& 6& 6& 6& 8& 8& 8& 8   &   8& 8& 8& 8& 8& 8& 8& 8   &  5& 5& 5& 5& 7& 5& 6& 8  &  5& 5& 5& 5& 7& 5& 6& 8
\end{array}
\right].
\]
Note that this psyquandle is pI-adequate since the last two right blocks have the same diagonal. We will consider the following two pseudolinks diagrams which we will denote by $P_a$ and $P_b$,
\[ \includegraphics[scale=1]{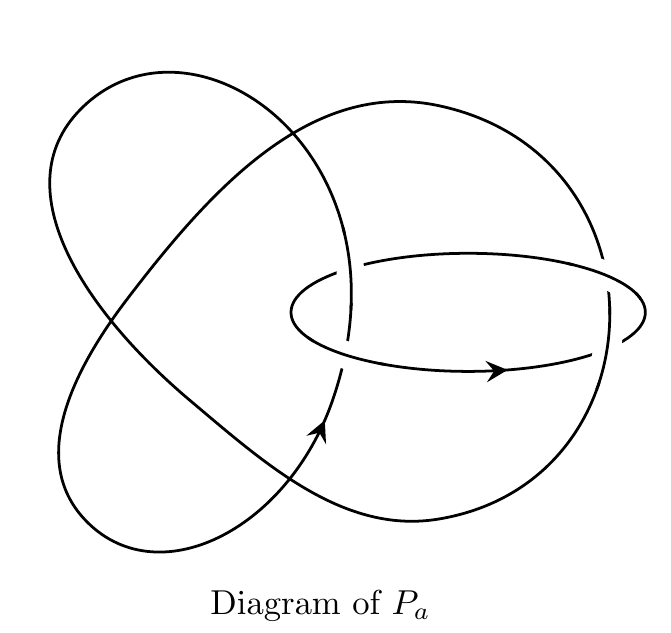} \]
\[ \includegraphics[scale=1]{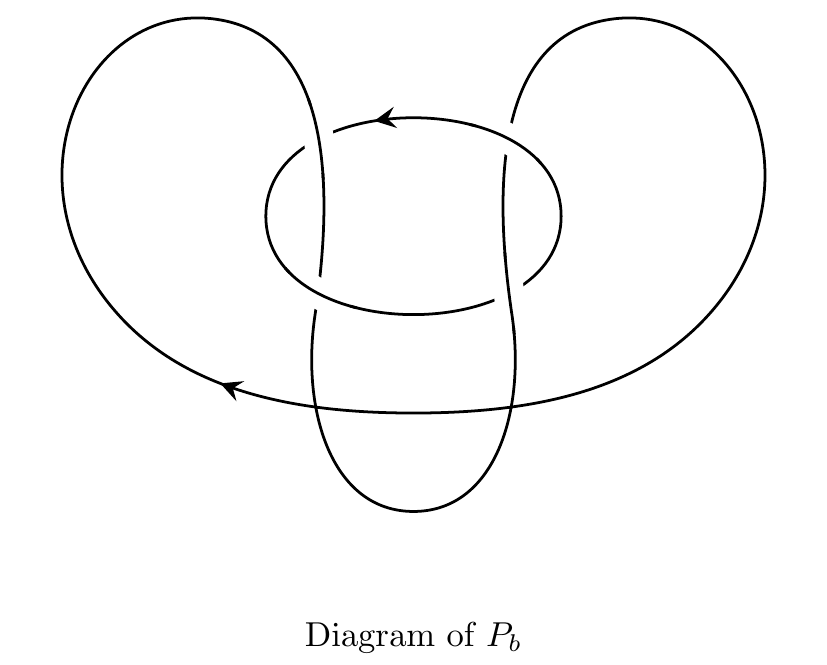} \]
The two pseudolinks diagrams $P_a$ and $P_b$ have equal psyquandle counting invariant values $\Phi_X^\mathbb{Z}(P_a) =\Phi_X^\mathbb{Z}(P_b) = 40$, but the pseudolinks are distinguished by their in-degree psyquandle quiver polynomial,
\begin{eqnarray*}
\Phi_{X}^{\text{deg}^+}(P_a) &=& u^{132}+4u^{72}+3u^{52}+8u^{32}+24u^{12} \neq\\
\Phi_{X}^{\text{deg}^+}(P_b) &=& u^{180}+4u^{84}+3u^{52}+8u^{20}+24u^{12}.
\end{eqnarray*}
Note that we are considering the full pI-adequate psyquandle coloring quiver for both $P_a$ and $P_b$.
\end{example}

Lastly, we will end this section with a few examples that illustrate that the in-degree polynomial is also useful for distinguishing classical links as well as virtual links. 

\begin{example}
Let $X$ be the biquandle with the following operation matrix

\[
\left[
\begin{array}{cccc|cccc}
3& 3& 3& 3&   3& 3& 4& 1\\
2& 2& 2& 2&   2& 2& 2& 2\\
1& 1& 1& 1&   4& 1& 1& 3\\
4& 4& 4& 4&   1& 4& 3& 4
\end{array}
\right]
\]
Setting $S = \{ \phi_1, \phi_2 \}$ where $\phi_1, \phi_2:X \rightarrow X$ are biquandle endomorphisms defined by  $\phi_1(1)=\phi_1(3)= \phi_1(4)=2$, $\phi_1(2)=4$ and $\phi_2(1)=\phi_2(3)= \phi_2(4)=4$, $\phi_2(2)=2$. The links $L7a1$ and  $L7a2$ both have equal counting invariant $\Phi_X(L7a1) = 16 = \Phi_X(L7a2)$, but the two links are distinguished by their in-degree psyquandle quiver polynomial,
\begin{eqnarray*}
\Phi_{X}^{\text{deg}^+,S}(L7a1) &=& 2u^{10}+2u^6+12 \neq\\
\Phi_{X}^{\text{deg}^+,S}(L7a2) &=& 2u^{12}+2u^4+12.
 \end{eqnarray*}
\end{example}

\begin{example}
Let $X = \mathbb{Z}_9$ be the Alexander biquandle with operations defined by $x \utr y = 7x+4y$ and $x \otr y = 2x$.
We consider the virtual knots $3.5$ and $3.7$.
\[ \includegraphics[scale=1]{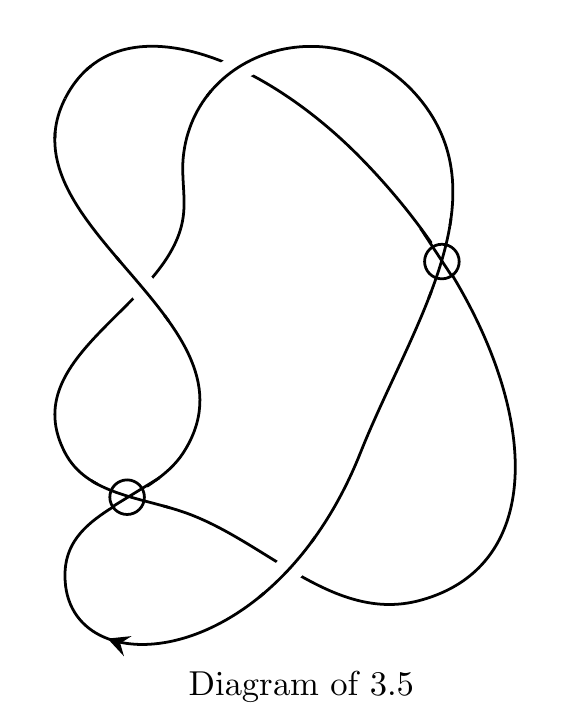} \hspace{3cm} \includegraphics[scale=1]{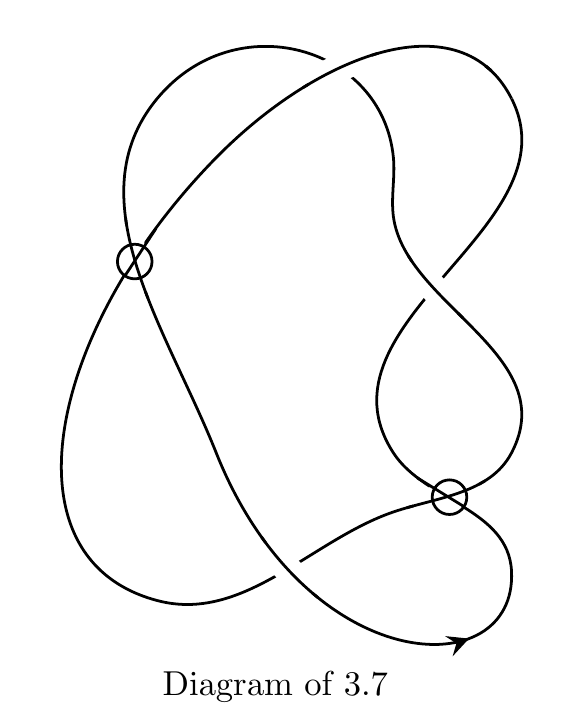}\]

Note that $3.5$ and $3.7$ have the same biquandle counting invariant values: $\Phi^{\mathbb{Z}}_{X}(3.5)=9=\Phi^{\mathbb{Z}}_{X}(3.7)$. However, the in-degree biquandle quiver polynomials can distinguish these two virtual knots,
\begin{eqnarray*}
\Phi_{X}^{\text{deg}^+}(3.5) &=& u^{21}+2u^{12}+6u^{6} \neq\\
\Phi_{X}^{\text{deg}^+}(3.7) &=& u^{33}+8u^{6}.
\end{eqnarray*}

\end{example}

\begin{example}
Let $X=\mathbb{Z}_9$ be the Alexander biquandle with operations defined by $x \utr y = 4x+y$ and $x \otr y = 5x.$ We compute the in-degree polynomial for a collection of virtual knots.
\begin{center}
\begin{tabular}{ r| l }
$L$ & $\Phi_X^{\text{deg}^+}(L)$ \\
\hline
$2.1$ & $ u^{15}+2u^6$ \\
$3.1$ & $u^{21}+2u^{12}+6u^6$ \\
$3.2$ & $u^{15}+ 2u^6$ \\
$3.3$ & $u^{15}+ 2u^6$ \\
$3.4$ & $u^{15}+ 2u^6$ \\
$3.5$ & $u^{21}+2u^{12}+6u^6$ \\
$3.6$ & $u^{51}+2u^{24}+24u^6$ \\
$3.7$ & $u^{33} + 8u^6$ \\
$4.1$ & $u^{15}+2u^6$ \\
$4.2$ & $u^{21}+2u^{12}+6u^6$ \\
$4.3$ & $u^{21}+2u^{12}+6u^6$ \\
$4.4$ & $u^{15}+2u^6$ \\
\end{tabular}
\end{center}
Note that the in-degree polynomial serves as an effective invariant of virtual knots. 
\end{example}

\section{Questions} \label{Q}

We conclude with some questions for future research.

\begin{itemize}
\item What additional enhancements can be added to these quivers to further
strengthen the invariants?
\item What about the cases of pseudo-singular and virtual pseudo-singular knots,
which include classical, singular, pre- and virtual crossings? Including virtual
crossings by allowing new crossing types in the detour move does not change
the psyquandle structure, but interaction rules for precrossings with singular 
crossings may require additional axioms.
\item Adding operations at virtual crossings will likewise require a new
algebraic structure, but analogous quiver-valued invariants are surely possible.
\item What conditions on a psyquandle and set of endomorphisms lead to specific 
types of quivers? For instance, a constant endomorphism yields a star while an
automorphism produces a union of cycles. We are particularly interested in 
conditions yielding Hasse diagrams of posets.
\end{itemize}

\bibliography{jc-sn2020}{}
\bibliographystyle{abbrv}

\bigskip

\noindent
\textsc{Mathematics and Statistics Department \\
Hamilton  College \\
198 College Hill Rd. \\
Clinton, NY 13323} \\ \\
\textsc{Department of Mathematical Sciences \\
Claremont McKenna College \\
850 Columbia Ave. \\
Claremont, CA 91711}\\

\end{document}